\newcommand{\be}{\begin{equation}}
\newcommand{\ee}{\end{equation}}
\newcommand{\ber}{\begin{eqnarray}}
\newcommand{\eer}{\end{eqnarray}}
\newtheorem{thm}{Theorem}[section]
\newtheorem{prop}{Proposition}[section]
\newtheorem{fact}{F}[section]
\newtheorem{lem}{Lemma}[section]
\newtheorem{rem}{Remark}[section]
\def\unnumfootnote{\xdef\@thefnmark{}\@footnotetext}
\long\def\symbolfootnote[#1]#2{\begingroup\def\thefootnote{\fnsymbol{footnote}}
\footnote[#1]{#2}\endgroup}
\numberwithin{equation}{section}
\def\mean{\hbox{${\rm I \hskip -2pt E}$}}
\def\Pr{\hbox{${\rm I \hskip -2pt P}$}}
\begin{document}
%
\title{On the Convergence of Finite Order Approximations of Stationary Time Series}
%
%
%

\begin{center}
{\Large {\bf On the Convergence of Finite Order Approximations of Stationary Time Series}}
\end{center}
\vspace{1.5cm}

\begin{center}
{ \bf Symantak DATTA GUPTA \footnotemark[1], Ravi R. MAZUMDAR\footnotemark[2] and Peter W. GLYNN\footnotemark[3]
}

\end{center}
\vspace{1cm}
\noindent \footnotemark[1] Department of Electrical and Computer Engineering, University of Waterloo, Waterloo, ON N2L 3G1, Canada
                                                e-mail: sdattagu@engmail.uwaterloo.ca

\noindent \footnotemark[2] Department of Electrical and Computer Engineering, University of Waterloo, Waterloo, ON N2L 3G1, Canada
                                                e-mail: mazum@ece.uwaterloo.ca
                                                
\noindent \footnotemark[3] Department of Management Science and Engineering, Stanford University, Stanford, Ca 94305, USA 
                                                e-mail: glynn@stanford.edu
\vspace{1cm}

\begin{center}
\today
\end{center}
\vspace{0.5cm}

\begin{abstract}

The approximation of a stationary time-series by finite order autoregressive (AR) and moving averages (MA) is a problem that occurs in many applications. In this paper we study asymptotic behavior of the spectral density of finite order approximations of wide sense stationary time series. It is shown that when the on the spectral density is non-vanishing in $[-\pi,\pi]$ and the covariance is summable, the spectral density of the approximating autoregressive sequence converges at the origin. Under additional mild conditions on the coefficients of the Wold decomposition it is also shown that the spectral densities of both moving average and autoregressive approximations converge in $L_2$ as the order of approximation increases.

\end{abstract}
\vspace{0.5cm}

\noindent{\bf Keywords:} Wide sense stationary time series, autoregressive estimate, moving average estimate, spectral density, Wold decomposition, time average variance constant

\vspace{0.3cm}

\vspace{0.5cm}

\noindent{\bf Short-title:} Convergence of spectral density
\vspace{0.5cm}

\section{Introduction}
%
%
%
%
{The} linear estimation of a time series from a finite number of past observations is a problem encountered in many fields of application. This paper deals with the spectral properties of finite order linear approximations for a {\em regular}  zero-mean, real-valued wide sense stationary (WSS) sequence with finite second moment. From the Wold decomposition (\cite{Caines, Shiryayev}), it follows that such a process can be expressed as an infinite weighted sum of unit variance, uncorrelated random variables called the {\em innovation process}. This is a moving average (MA) type model for the process.  By a one-step application of the projection theorem, on the other hand, one obtains an infinite order autoregressive model of the process, wherein the process is expressed as a weighted sum of all its past values, plus the current value of the innovation process.

The problem of approximating an infinite-order process and its spectral density using a finite order model has had a long history of research (\cite{Broersen}) and a detailed survey of the key results in this area is available in \cite{Kay}. A number of problems have been studied related to choosing the optimal order for the approximation for parsimonious modeling as in the work of Akaike through the well known Akaike Information Criterion (AIC, \cite{Akaike}) and the Final Prediction Error criterion (\cite{Akaike69a}, \cite{Akaike70}). Related issues are the estimation of the spectral density through autoregressive (AR) models in the work of Parzen (\cite{Parzen00, Parzen01, Parzen02}) or that of Priestley (\cite{Priestley}) which compares the performance of autoregressive models to that of window based spectral estimators. Among the papers that address the asymptotic behaviour of finite-order estimates as the model order goes to infinity, the results of \cite{Baxter62} are noteworthy. In the works of Pourahmadi (\cite{Pourahmadi89, Pourahmadi93, Pourahmadi}), the nature and rates of convergence of the autoregressive coefficients have been discussed for univariate and multivariate stochastic processes. Similar results on the rate of convergence have been considered in \cite{Findley} and \cite{Poskitt}.

While there are a number of works concerned with the optimal model order; few results are available on the convergence of the spectral density of the approximating finite order autoregressive process.  The main motivation of this paper is to study the asymptotic behavior of the spectral density of finite order approximation models, as the order approaches infinity and to obtain conditions under which the spectral density of the approximation converges to the true spectral density.

The value of the spectral density at the origin plays a special role in the invariance principles for stationary ergodic sequences. Let $\bar{X}_n = \frac{1}{n} \sum_{k=1}^n X_k$ where $\{X_k\}$ is a WSS ergodic process. Let the value of the spectral density be finite at the origin and denote it by $\Gamma^2$.  This quantity is called the Time Average Variance Constant (TAVC) of the process (\cite{Wu}) and according to the central limit theorem due to Ibragimov and Linnik (\cite{Ibragimov}),
\be\nonumber
\sqrt{n} (\bar{X}_n - \mu) \Longrightarrow N(0,\Gamma^2)
\ee
where $\Longrightarrow$ denotes convergence in distribution.

 $\Gamma^2$ plays an important role in steady-state simulations, where the objective is to compute the limit $\lim_{n\to\infty}\bar{X}_n$ when it exists (\cite{Glynn}). One way of estimating $\Gamma^2$ is by windowed estimates based on finite order autoregressive approximations of the observed stationary process. Instead of directly estimating the moments, one obtains approximations for the spectral density at the origin.

In this paper it is shown that when the  spectral density of the process is strictly non-vanishing in $\left(-\frac{1}{2}, \frac{1}{2}\right]$ and its covariance sequence is in $\ell_1$, the spectral density of the approximating autoregressive sequence converges at the origin, as the order of approximation goes to infinity. It is further shown that under the additional condition that the coefficients of the Wold decomposition  of the original process  belong to ${\ell}_1$, spectral densities of both the moving average and the autoregressive estimates converge in $L_2$.

\section{Preliminaries}

Consider a  zero-mean, regular, discrete time, real-valued WSS stochastic process $\lbrace X_n\rbrace_{n\in\mathbb{Z}}$ defined on a probability space ($\Omega$, $\mathfrak{F}$, $\Pr$). Let $L_2(\Pr)$ denote the Hilbert  space of random variables with finite second moment with the inner product $\mean[\cdot,\cdot]$ defined thereon. Two zero mean\footnote{In this paper it is always assumed that the underlying random variables are of zero mean} random variables $X, Y\in L_2(\Pr)$ are said to be orthogonal if $\mean[XY] = 0$. Let $R_k = \mean[X_{n+k}X_n]$  denote the covariance sequence.

The spectral density $S(\lambda)$ is defined as the Fourier transform of the covariance sequence. 
 \be\nonumber
 S(\lambda) = \sum_{k\in\mathbb{Z}} R_k e^{-2\pi i\lambda k},\mbox{  }\mbox{  }\mbox{  }\lambda \in \left(-\frac{1}{2},\frac{1}{2}\right] \\ \ee
 The covariance sequence can be recovered from the spectral density using the inverse Fourier Transform.
 \be \nonumber
 R_k = \int_{-\frac{1}{2}}^{\frac{1}{2}} S(\lambda) e^{2\pi i\lambda k}d\lambda,\ \ \mbox{  }\mbox{  } k\in\mathbb{Z}
 \ee

 For $n\in\mathbb{Z}$ define the space $H_n =$ linear span of $\lbrace X_n, X_{n-1}, X_{n-2}, \ldots \rbrace$, i.e., the Hilbert subspace generated by the closure of the linear combinations of $X_n$ and all its past values at time $n$.

Let $Y$ be a random variable defined on $L_2(\Pr)$.  Define $\overline{\mean}[Y|H_n]$ as the projection of $Y$ onto the space $H_n$ with respect to the inner-product $\mean[\cdot,\cdot]$. Then $\overline{\mean}[Y|H_n]$ is the minimum mean squared error (MMSE) linear estimate of $Y$ given $H_n$.

 Define $\lbrace \nu_n\rbrace_{n\in\mathbb{Z}}$ as the innovation process associated with $\lbrace X_n\rbrace_{n\in\mathbb{Z}}$, i.e.,
  \begin{equation}\nonumber
 \nu_n = X_n - \overline{\mean}[X_n|H_{n-1}]
 \end{equation}
  with $\mean[\nu_n]=0$ and $\mean[\nu_n\nu_k]  = \sigma^2 \delta_{n-k}$, where $\delta_k$ denotes the Kronecker delta. Without loss of generality it is assumed that $\sigma^2=1$. By construction, $\nu_n$ is orthogonal to $H_{k-1}$,  for all $k\le n$. The sequence $\lbrace \nu_k \rbrace _{k\in\mathbb{Z},k\le n}$ spans the subspace $H_n$ and constitutes an orthogonal basis in $L_2(\Pr)$  for the latter. Note that $\overline{\mean}[X_n|H_{n-1}]$ corresponds to the MMSE linear estimate of $X_n$ given the space $H_{n-1}$ and it can therefore be written as a linear combination of all the past values of $X_{n}$ as follows:
\begin{equation}\label{prel-3}
\overline{\mean}[X_n|H_{n-1}] = \sum_{k=1}^{\infty}b_kX_{n-k}
\end{equation}
where the $b_k$s minimize the  mean squared error. The corresponding mean squared error is
\begin{eqnarray}\nonumber
\mean[(X_n-\sum_{k=1}^{\infty}b_k X_{n-k})^2]&=& \mean[(X_n-\overline{\mean}[X_n|H_{n-1}])^2]\notag\\
&=& \mean[\nu_n^2] = 1 \mbox{ for all }n \end{eqnarray}
 $\overline{\mean}[X_n|H_{n-1}]$ is thus an infinite-order autoregressive estimate of $X_n$. On the other hand, being a wide sense stationary process, $X_n$ may be expressed in terms of its Wold decomposition as follows (\cite{Caines, Shiryayev}).
\begin{equation}\label{definewold}
X_n = \sum_{k=0}^\infty a_k\nu_{n-k}\mbox{ for all }n\in\mathbb{Z}
\end{equation}
\begin{equation}\nonumber
\mbox{ with} \sum_{k=0}^\infty |a_k|^2 < \infty  \mbox{  and } a_0 =1.
\end{equation}
This gives an infinite order moving average representation of $X_n$. The coefficients $\{b_k\}$ and $\{a_k\}$ are related to each other as follows. For each $k$,
\be \label{akbk} b_k = \sum_{j=1}^k a_jb_{k-j}\ee

From (\ref{definewold}) and the properties of the innovations sequence it is seen that:
\be\nonumber
R_k = \sum_{n=0}^{\infty} a_na_{n-k}
\ee
Therefore, if the spectral density is finite at the origin then:
\be
\nonumber
\sum_{k\in\mathbb{Z}} R_k = \sum_{k\in\mathbb{Z}}\sum_{n=0}^{\infty} a_na_{n-k}  = \left(\sum_{k=0}^\infty a_k\right)^2 < \infty
\ee

Let $p$ be a positive integer, and define the space $H_n^p =$ linear span of $\{ X_n$, $X_{n-1}$, $X_{n-2}$, $\ldots$ $X_{n-p+1}\}$, i.e., the space of all linear combinations of the $p$ most recent values of the sequence at time $n$, including $X_n$; and all their limits when they exist. $H_n^p$ is then a closed Hilbert subspace of $H_n$, for all $p$ and $n$. Clearly, $H_n^\infty = H_n$, by definition.

Define $\overline{X}_{n,p}$ as the MMSE autoregressive approximation of $X_n$ of order $p$. Then $\overline{X}_{n,p}$ is a linear combination of $\lbrace X_{n-1}, X_{n-2}, \ldots, X_{n-p}\rbrace$ and is given by

\be \label{res-2}
\overline{X}_{n,p}= \overline{\mean}[X_n|H_{n-1}^p] = \sum_{k=1}^{p}b_{k,p}X_{n-k}
\ee
where the coefficients $b_{k,p}$ minimize the error $\mean[(X_n-\sum_{k=1}^{p}b_{k,p}X_{n-k})^2]$. The coefficients $b_{k,p}$ can be obtained as solutions to the modified Yule-Walker equations (\cite{Picinbono, Kailath}) given by:
\be \nonumber\bm{B_p}=\bm{R_p}^{-1}\bm{r_p} \ee
where \be\nonumber
\bm{R_p} =
 \begin{pmatrix}
  R_0 & R_1 & \cdots & R_{p-1} \\
  R_1 & R_0 & \cdots & R_{p-2} \\
  \vdots  & \vdots  & \ddots & \vdots  \\
  R_{p-1} & R_{p-2} & \cdots & R_0
 \end{pmatrix}
\ee
 \be \nonumber \bm{r_p} = [R_1 \mbox{ }R_2 \mbox{ } \ldots \mbox{ } R_p]\ee
 and \be \nonumber \bm{B_p} = [b_{1,p}\mbox{ }\ldots\mbox{ }b_{p,p}]\ee

\section{Moving average approximations of regular stationary sequences}

Consider a moving average approximation of $X_n$ of order $p$, constructed using the innovation sequence $\{\nu_n\}$.
Note by assumption, $\mbox{Var}[\nu_n]=\sigma_\nu^2=1$ for all $n\in\mathbb{N}$. We first state some known facts without proof.

 \begin{fact}
 The best $p$th order moving average approximation of $X_n$ is given by $\hat{X}_{n,p} = \sum_{k=0}^p a_k\nu_{n-k}$.
\end{fact}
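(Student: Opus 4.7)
The plan is to recognize this as a direct application of the orthogonal projection theorem in $L_2(\Pr)$, exploiting the orthonormality of the innovations sequence $\{\nu_k\}$.

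First I would set up the approximation problem precisely. A $p$th order moving average approximation is by definition a random variable of the form $\sum_{k=0}^p c_k \nu_{n-k}$ for some real coefficients $\{c_k\}$. The collection of such random variables forms a $(p+1)$-dimensional closed subspace $M_n^p = \mathrm{span}\{\nu_n, \nu_{n-1}, \ldots, \nu_{n-p}\}$ of $L_2(\Pr)$. By the projection theorem, the unique element of $M_n^p$ minimizing $\mean[(X_n - Z)^2]$ over $Z \in M_n^p$ is the orthogonal projection $\hat{X}_{n,p}$ of $X_n$ onto $M_n^p$ with respect to the inner product $\mean[\cdot,\cdot]$.

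Next I would use the fact that $\{\nu_{n-k}\}_{k=0}^p$ is an orthonormal basis of $M_n^p$ (since $\sigma^2 = 1$). This yields the explicit formula
\be\nonumber
\hat{X}_{n,p} = \sum_{k=0}^p \mean[X_n \nu_{n-k}] \, \nu_{n-k}.
\ee
The remaining step is to compute $\mean[X_n \nu_{n-k}]$ for $0 \le k \le p$. Substituting the Wold decomposition (\ref{definewold}) and invoking orthonormality of the innovations term by term gives
\be\nonumber
\mean[X_n \nu_{n-k}] = \sum_{j=0}^\infty a_j \, \mean[\nu_{n-j}\nu_{n-k}] = \sum_{j=0}^\infty a_j \, \delta_{j-k} = a_k.
\ee
The interchange of expectation and infinite sum is justified by the $\ell_2$-convergence of $\sum_{k=0}^\infty a_k \nu_{n-k}$ in $L_2(\Pr)$, which makes $\mean[\cdot\,\nu_{n-k}]$ a continuous linear functional. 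Substituting back yields $\hat{X}_{n,p} = \sum_{k=0}^p a_k \nu_{n-k}$, as claimed.

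There is no serious obstacle here; the statement is essentially the observation that the coefficients in the Wold expansion are the Fourier coefficients of $X_n$ with respect to the orthonormal system $\{\nu_{n-k}\}$, and truncation of such an expansion gives the best finite-dimensional approximation. The only point requiring mild care is the justification of the interchange of limit and inner product, which follows from continuity of the $L_2$ inner product.
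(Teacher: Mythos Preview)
Your proof is correct and is the standard argument via the projection theorem and orthonormality of the innovations. The paper itself states this result as a known fact without proof, so there is nothing to compare against; your write-up would serve perfectly well as the omitted justification.
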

The next result is related to the mean square convergence of the p-th order approximation that readily follows from the properties of the innovation process and the Wold decomposition.

\begin{fact}As $p\to\infty$, $\hat{X}_{n,p}$ converges to $X_n$ in quadratic mean.\end{fact}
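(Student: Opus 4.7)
The plan is to reduce the claim to a routine tail-sum computation that exploits the orthonormality of the innovations. By definition of the Wold decomposition (\ref{definewold}) and of the approximation $\hat{X}_{n,p}$ in Fact 3.1, the error admits the explicit representation
\be\nonumber
X_n - \hat{X}_{n,p} \;=\; \sum_{k=p+1}^{\infty} a_k \nu_{n-k},
\ee
where the right-hand side converges in $L_2(\Pr)$ because the partial sums of $\sum_{k\ge 0} a_k \nu_{n-k}$ do, thanks to $\sum_{k\ge 0} |a_k|^2 < \infty$ and orthonormality of $\{\nu_n\}$.

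Next I would compute the mean squared error directly. Using that $\mean[\nu_j \nu_\ell] = \delta_{j-\ell}$ (recall $\sigma^2 = 1$ by our normalization), interchanging expectation and limit for the $L_2$-convergent series gives
\be\nonumber
\mean\!\left[(X_n - \hat{X}_{n,p})^2\right] \;=\; \sum_{k=p+1}^{\infty} a_k^2.
\ee
Since $\sum_{k=0}^{\infty} a_k^2 < \infty$, the tail on the right-hand side tends to $0$ as $p \to \infty$, which is the desired quadratic-mean convergence.

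There is essentially no obstacle here: the result is a direct consequence of the square-summability of the Wold coefficients and the orthonormality of the innovation sequence. The only thing one needs to be careful about is the justification that $\mean[(\sum_k a_k \nu_{n-k})^2] = \sum_k a_k^2$ for the infinite series, which follows from continuity of the inner product on $L_2(\Pr)$ applied to the $L_2$-limit of the partial sums.
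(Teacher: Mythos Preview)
Your proof is correct and is exactly the argument the paper has in mind: the paper states this fact without proof, remarking only that it ``readily follows from the properties of the innovation process and the Wold decomposition,'' which is precisely the orthonormality-plus-tail-sum computation you carry out.
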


\subsection{Convergence of the Spectral Density in $L_2$}
For all functions $F: \left(-\frac{1}{2}, \frac{1}{2}\right]\to\mathbb{C}$ such that $\int_{-\frac{1}{2}}^{\frac{1}{2}}|F(\lambda)|^2d\lambda < \infty$, define $||\cdot||$ to be the $L_2$ norm as follows:
\begin{equation}\nonumber
||F(\lambda)|| = \left|\int_{-\frac{1}{2}}^{\frac{1}{2}}|F(\lambda)|^2d\lambda\right|^{\frac{1}{2}}
\end{equation}

\begin{prop}
Under the condition  $\sum_{k=0}^\infty |a_k|< \infty$ where $\{a_k\}$ correspond to the coefficients in the Wold decomposition, 
the spectral density of $\hat{X}_{n,p}$ converges uniformly in $L_2$ to that of $X_n$ as $p\to\infty$.
\end{prop}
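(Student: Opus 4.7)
The plan is to identify the spectral densities as squared moduli of Fourier series in the Wold coefficients, then exploit the absolute summability $\sum_k|a_k|<\infty$ to get uniform convergence of those Fourier series, which immediately implies $L_2$ convergence of the squared moduli on the bounded interval $(-\tfrac12,\tfrac12]$.

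First I would compute the spectral density of $\hat X_{n,p}=\sum_{k=0}^p a_k\nu_{n-k}$ using the fact that $\{\nu_n\}$ is orthogonal with unit variance. A direct calculation of the covariance of $\hat X_{n,p}$ followed by Fourier inversion yields
\be\nonumber
S_p(\lambda)=\bigl|A_p(\lambda)\bigr|^{2},\qquad A_p(\lambda):=\sum_{k=0}^{p} a_k e^{-2\pi i\lambda k},
\ee
and, by the same reasoning applied to the Wold expansion \eqref{definewold}, the spectral density of $X_n$ is
\be\nonumber
S(\lambda)=\bigl|A(\lambda)\bigr|^{2},\qquad A(\lambda):=\sum_{k=0}^{\infty} a_k e^{-2\pi i\lambda k}.
\ee

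The second step is to exploit the hypothesis $\sum_{k=0}^{\infty}|a_k|<\infty$. By the Weierstrass $M$-test, $A_p(\lambda)\to A(\lambda)$ uniformly on $(-\tfrac12,\tfrac12]$, with $A$ continuous. Moreover the tail bound gives $\|A-A_p\|_\infty \le \sum_{k>p}|a_k|\to 0$, and the triangle inequality in $\ell_1$ yields $|A_p(\lambda)|,|A(\lambda)|\le M$ for $M:=\sum_{k=0}^\infty|a_k|$, uniformly in $\lambda$ and $p$.

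Third, I would write the telescoping identity
\be\nonumber
S_p(\lambda)-S(\lambda)
=\bigl(A_p(\lambda)-A(\lambda)\bigr)\overline{A_p(\lambda)}
 +A(\lambda)\bigl(\overline{A_p(\lambda)}-\overline{A(\lambda)}\bigr),
\ee
and apply the sup-norm bound from the previous step to obtain
\be\nonumber
\bigl\|S_p-S\bigr\|_\infty \le 2M\,\bigl\|A_p-A\bigr\|_\infty \longrightarrow 0\qquad(p\to\infty).
\ee
Since the interval $(-\tfrac12,\tfrac12]$ has length $1$, uniform convergence implies
\be\nonumber
\|S_p-S\|=\left(\int_{-1/2}^{1/2}|S_p(\lambda)-S(\lambda)|^{2}\,d\lambda\right)^{\!1/2}
\le \|S_p-S\|_\infty \longrightarrow 0,
\ee
which is the claimed uniform $L_2$ convergence.

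I do not expect a genuine obstacle: everything rests on the $\ell_1$ hypothesis converting the Wold series into an absolutely convergent trigonometric series. The only mildly delicate point is keeping the algebra clean when passing from $|A_p|^2-|A|^2$ to a bound linear in $A_p-A$; using the identity above (rather than factoring as $(|A_p|-|A|)(|A_p|+|A|)$) makes the estimate immediate without needing any reverse triangle inequality.
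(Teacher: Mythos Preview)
Your argument is correct and follows essentially the same route as the paper: both proofs write the spectral densities as $|A_p|^2$ and $|A|^2$, split the difference $|A|^2-|A_p|^2$ into two cross terms each containing the tail $A-A_p=\sum_{k>p}a_ke^{-2\pi i k\lambda}$, bound these uniformly by $\big(\sum_k|a_k|\big)\big(\sum_{k>p}|a_k|\big)$, and then pass from the sup-norm bound to the $L_2$ bound over the unit interval. The only cosmetic difference is that the paper writes $A(\lambda)A(-\lambda)$ in place of $A(\lambda)\overline{A(\lambda)}$ (equivalent since the $a_k$ are real) and organizes the telescoping with $A_0^p,A_{p+1}^\infty$ notation, but the estimates are identical.
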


\begin{proof}

Let $\sum_{k=0}^\infty |a_k| = S$.
It follows from the properties of the innovation process, that the covariance sequence of $\{X_n\}$ is $\{R_k\}$ where
\be\nonumber R_k = \sum_{l=0}^\infty a_la_{l-k}\ee

The spectral density $S_X(\lambda)$ is given by
\ber
S_X(\lambda)&=& \sum_{k\in\mathbb{Z}} R_ke^{-2\pi ik\lambda}
\notag\\
 &=& \sum_{k\in\mathbb{Z}}\sum_{l=0}^\infty a_la_{l-k}e^{-2\pi ik\lambda}\notag\\
 &=& A_0^\infty(\lambda)A_0^\infty(-\lambda)\notag\eer
where for $M, N \in \mathbb{N}$  \be\nonumber A_M^N(\lambda)=\sum_{l=M}^N a_le^{-2\pi il\lambda}\nonumber\ee and
\be\nonumber A_M^\infty(\lambda)=\sum_{l=M}^\infty a_le^{-2\pi il\lambda}\nonumber\ee
Similarly, the spectral density of $\hat{X}_{n,p}$ is given by
\begin{equation}\nonumber
S_{\hat{X}_p}(\lambda) = A_0^p(\lambda)A_0^p(-\lambda)
\end{equation}
Now consider
\be\label{norm}\begin{split}
&\!
\begin{aligned}[t]
&& ||S_X(\lambda)-S_{\hat{X}_p}(\lambda)||\end{aligned}\\
&\!
\begin{aligned}[t]
  =  ||A_0^\infty(\lambda)A_0^\infty(-\lambda)-A_0^p(\lambda)A_0^p(-\lambda)||\end{aligned}\\
&\!
\begin{aligned}[t]
  =  ||(A_0^p(\lambda)A_{p+1}^\infty(-\lambda)+A_{p+1}^\infty(\lambda)A_0^\infty(-\lambda)||\end{aligned}\\
&\!
\begin{aligned}[t]
  \le ||(A_0^p(\lambda)A_{p+1}^\infty(-\lambda)||+||A_{p+1}^\infty(\lambda)A_0^\infty(-\lambda)||
 \end{aligned}\\
&\!
\begin{aligned}[t]
 =\left|\int_{-\frac{1}{2}}^{\frac{1}{2}}I_1(p,\lambda) d\lambda\right|^{\frac{1}{2}}+\left|\int_{-\frac{1}{2}}^{\frac{1}{2}}I_2(p,\lambda) d\lambda\right|^{\frac{1}{2}}
 \end{aligned}\\
\end{split}
\ee
where
\begin{equation}\nonumber
I_1(p,\lambda)= |A_0^p(\lambda)A_{p+1}^\infty(-\lambda)|^2
\end{equation}
\begin{equation}\nonumber
I_2(p,\lambda)= |A_{p+1}^\infty(\lambda)A_0^\infty(-\lambda)|^2
\end{equation}
Then,
\ber I_1(p,\lambda) &=& |A_0^p(\lambda)|^2|A_{p+1}^\infty(-\lambda)|^2\notag\\
&\le& \left(\sum_{k=0}^p |a_k|\right)^2\left(\sum_{k=p+1}^\infty |a_k|\right)^2
 \notag\\
 &\le& S^2\left(\sum_{k=p+1}^\infty |a_k|\right)^2\notag\eer
and similarly, 
\be\nonumber I_2(p,\lambda)\le S^2\left(\sum_{k=p+1}^\infty |a_k|\right)^2 \ee
Since the sum $\sum_{k=0}^\infty |a_k|$ is non-decreasing and converges to $S<\infty$, for any given $\epsilon>0$, there exists a positive integer $p$ such that $\sum_{k=p+1}^\infty |a_k|<\frac{\epsilon}{2S}$, so that
\begin{equation}\label{I1}
I_1(p,\lambda)<\frac{\epsilon^2}{4}
\end{equation}
\begin{equation}\label{I2}
I_2(p,\lambda)<\frac{\epsilon^2}{4}
\end{equation}

Combining (\ref{I1}) and (\ref{I2}) in (\ref{norm}) yields
\be\nonumber
||S_X(\lambda)-S_{\hat{X}_p}(\lambda)|| < \left(\left|\int_{-\frac{1}{2}}^{\frac{1}{2}}\frac{\epsilon^2}{4} d\lambda\right|^{\frac{1}{2}}+\left|\int_{-\frac{1}{2}}^{\frac{1}{2}}\frac{\epsilon^2}{4} d\lambda\right|^{\frac{1}{2}}\right)
 = \epsilon
 \ee

Thus, for any $\epsilon>0$, there exists a positive integer $p$ such that $||S_X(\lambda)-S_{\hat{X}_p}(\lambda)||<\epsilon$ for all $\lambda$. Therefore, $S_{\hat{X}_p}(\lambda)$ converges uniformly in $L_2$ to $S_X(\lambda)$ as $p\to \infty$.
\end{proof}

\section{Finite AR approximations of regular stationary sequences }
We begin by looking at the asymptotic behaviour of the autoregressive approximation given by (\ref{res-2}).  Define $\nu_{n,p}$ as the error in estimation corresponding to the AR-$p$ approximation of $\{X_n\}$, i.e.,

\be \nonumber \nu_{n,p} = X_n - \mean[X_n|H_{n-1}^p]\ee

Note that for all $p\in\mathbb{N}$
\begin{eqnarray}\label{enun}
\overline{\mean}[\nu_{n,p}|H_{n-1}]&=& \overline{\mean}[(X_n - \overline{\mean}[X_n|H_{n-1}^p])|H_{n-1}]\notag\\
&=& \overline{\mean}[X_n|H_{n-1}]-\overline{\mean}[X_n|H_{n-1}^p] \notag\\
&=&(X_n-\overline{\mean}[X_n|H_{n-1}^p])\notag\\
&-& (X_n-\overline{\mean}[X_n|H_{n-1}])\notag\\
 &=& \nu_{n,p} - \nu_n\end{eqnarray}

Let $\overline{\mean}[\nu_{n,p}|H_{n-1}] = \epsilon_{n,p}$. Then
\begin{eqnarray}
\epsilon_{n,p} &=& \nu_{n,p} - \nu_n \mbox{ and}\notag\\
 \mean[\epsilon_{n,p}] &=& \mean[\nu_{n,p} - \nu_n] = 0\notag \end{eqnarray}
As $X_n$ is a WSS sequence in $n$ and $\nu_{n,p}$ is constructed as a linear combination of $X_n$, $X_{n-1}$, $\ldots$, $X_{n-p}$ whose coefficients do not depend on $n$,  it follows that $\nu_{n,p}$ is also a WSS sequence in $n$. The variance of $\nu_{n,p}$, then, is only a function of p. Let $\mbox{Var}[\nu_{n,p}] = \sigma_p^2$. By (\ref{enun}), and the fact that $\nu_n$ is orthogonal to the subspace $H_{n-1}$ (and hence to $\epsilon_{n,p}$) we obtain:
\begin{eqnarray}\sigma_p^2 &=& \mbox{Var}[\nu_n] + \mbox{Var}[\epsilon_{n,p}]\notag\\
&=& 1 + \mbox{Var}[\epsilon_{n,p}]\notag\\
&=& 1 + \mean[\epsilon_{n,p}^2]\notag\\
&\ge& 1 \mbox{ for all $p\in\mathbb{N}$}
 \notag\end{eqnarray}
Note that for any  $q$, $p\in\mathbb{N}$ such that $q > p$,  $H_{n-1}^p\subset H_{n-1}^q$. It follows then, that $\overline{\mean}[X_n|H_{n-1}^q]$ is at least as good a linear estimate of $X_n$ as $\overline{\mean}[X_n|H_{n-1}^p]$, in terms of mean squared error. Therefore,
\be\nonumber
\mean[(X_n -\overline{\mean}[X_n|H_{n-1}^q])^2] \le \mean[(X_n -\overline{\mean}[X_n|H_{n-1}^p])^2]
\ee
and hence
\be \nonumber\sigma_q^2 \le \sigma_p^2  \ee
Hence, the sequence  $\sigma_p^2$ is a non-increasing sequence in $p$, bounded below by 1 and must therefore has a limit as $p \to \infty$ that is bounded from below by 1. It can be shown that the limit  is in fact equal to 1. 

\begin{rem}\label{P1}
 As $p \to \infty$, $\overline{\mean}[X_n|H_{n-1}^p]  \to \overline{\mean}[X_n|H_{n-1}]$ and  $\nu_{n,p} \to \nu_n$ in quadratic mean. \end{rem}
 
 The proof is a direct application of  \cite[Lemma 3.1(b)]{Caines}.

\subsection{Convergence of the Spectral Density at the Origin}

We now study conditions under which the spectral density of the finite order AR approximation converges at the origin. As mentioned $S(0)$ is referred to as the Time Average Variance Constant (TAVC) and plays an important role in simulations. Throughout the following it is assumed that the spectral density $S(\lambda)$ is non-vanishing in $\left(-\frac{1}{2}, \frac{1}{2}\right]$ i.e. there exists $\varepsilon  > 0$ such that $S(\lambda) > \varepsilon$ for all $\lambda \in \left(-\frac{1}{2}, \frac{1}{2}\right]$.

We start with the following lemma on the pointwise convergence of the coefficients $b_{k,p}$ (\cite[Proposition 3.1]{Degerine}).

\begin{lem}\label{pw}As $p\to\infty$, $b_{k,p}\to b_k$ for each $k\in\mathbb{N}$. 
\end{lem}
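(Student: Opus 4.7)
The plan is to lift the $L^2(\Pr)$-convergence of the finite-order projections provided by Remark \ref{P1} to coordinate-wise convergence of the associated coefficient sequences, by passing through the Kolmogorov isometry between the time-domain Hilbert space generated by $\{X_k\}_{k\in\mathbb{Z}}$ and the spectral-domain Hilbert space $L^2(S\,d\lambda)$. The hypotheses that $\sum_k |R_k| < \infty$ and $S(\lambda) > \varepsilon$ on $\left(-\tfrac12,\tfrac12\right]$ are both essential: together they produce a continuous spectral density bounded above (by $\sum |R_k|$) and away from zero (by $\varepsilon$), so that the weighted norm $\|\cdot\|_{L^2(S\,d\lambda)}$ and the unweighted norm $\|\cdot\|_{L^2(d\lambda)}$ are equivalent.

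The first step is to set up the isometric isomorphism $J$ from $\overline{\mathrm{span}}\{X_k : k\in\mathbb{Z}\}$ onto $L^2(S\,d\lambda)$, defined on generators by $J(X_k) = e^{2\pi i k\lambda}$ and extended by linearity and continuity. Next, define the symbol functions
\be\nonumber
B_p(\lambda) = \sum_{k=1}^{p} b_{k,p}\, e^{-2\pi i k \lambda}, \qquad B(\lambda) = \sum_{k=1}^{\infty} b_k\, e^{-2\pi i k \lambda},
\ee
so that $J(\overline{\mean}[X_n\mid H_{n-1}^p]) = e^{2\pi i n \lambda}\,B_p(\lambda)$ and $J(\overline{\mean}[X_n\mid H_{n-1}]) = e^{2\pi i n \lambda}\,B(\lambda)$. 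By the isometry together with Remark \ref{P1},
\be\nonumber
\int_{-1/2}^{1/2} |B_p(\lambda) - B(\lambda)|^2 S(\lambda)\,d\lambda \longrightarrow 0.
\ee
Applying the uniform lower bound $S(\lambda) \ge \varepsilon$ then yields $\|B_p - B\|_{L^2(d\lambda)} \to 0$. Since $B_p$ and $B$ are the Fourier series of the coefficient sequences $(\tilde b_{k,p})_{k \ge 1}$ (with $\tilde b_{k,p} := b_{k,p}$ for $k \le p$ and $0$ otherwise) and $(b_k)_{k \ge 1}$, Parseval's identity gives $\sum_{k=1}^\infty |\tilde b_{k,p} - b_k|^2 \to 0$, from which coordinate-wise convergence $b_{k,p} \to b_k$ for each fixed $k$ is immediate.

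The main obstacle in this argument is precisely the passage from $L^2(\Pr)$-convergence of the projections to $L^2(d\lambda)$-convergence of their symbol functions: this is exactly where the non-vanishing assumption on $S$ enters, because without a uniform lower bound on $S$ the norms on $L^2(S\,d\lambda)$ and $L^2(d\lambda)$ are not comparable, and high-frequency coefficient information can be annihilated by the weight $S(\lambda)$. A subsidiary point needing care is the verification that $(b_k) \in \ell^2$ so that $B \in L^2(d\lambda)$ makes sense; this also follows from $S \ge \varepsilon$, since $J(\overline{\mean}[X_n\mid H_{n-1}]) \in L^2(S\,d\lambda)$ combined with the lower bound places $B \in L^2(d\lambda)$.
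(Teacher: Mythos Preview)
Your argument is correct and actually delivers more than the lemma asks for: via Parseval you obtain $\ell^2$-convergence of the coefficient sequence, from which pointwise convergence is immediate. One small overstatement: you write that both standing hypotheses are ``essential'', but in fact only the lower bound $S(\lambda)\ge\varepsilon$ is used in your argument (it gives the one-sided domination $\varepsilon\|B_p-B\|_{L^2(d\lambda)}^2\le\|B_p-B\|_{L^2(S\,d\lambda)}^2$ and also forces $B\in L^2(d\lambda)$); the summability of $\{R_k\}$, and hence the upper bound on $S$, is not needed here.

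The paper's proof takes a quite different, purely time-domain route. It sets up the lower-triangular transformation between $(X_n,\ldots,X_{n-p})$ and the finite-order innovations $(\nu_{n,0},\ldots,\nu_{n-p,0})$, obtains the orthogonal expansion $X_n=\sum_{k=0}^{p}a_{k,p}\,\nu_{n-k,p-k}$, and from $\nu_{n,p}\to\nu_n$ in $L_2(\Pr)$ together with $\mean[X_n\nu_{n-k,p-k}]=a_{k,p}\sigma_{p-k}^2$ deduces $a_{k,p}\to a_k$ for each $k$. It then uses the Levinson-type recursion $\tilde b_{k,p}=-\sum_{j=1}^{k}a_{j,p}\tilde b_{k-j,p-j}$ and induction on $k$, comparing with (\ref{akbk}), to conclude $b_{k,p}\to b_k$. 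The trade-off is this: the paper's recursive argument makes no use of the positivity assumption $S\ge\varepsilon$ and therefore establishes the lemma for an arbitrary regular WSS process, whereas your spectral approach is shorter and yields the stronger $\ell^2$ conclusion but genuinely relies on the non-degeneracy of $S$ to pass from $L^2(S\,d\lambda)$ to $L^2(d\lambda)$.
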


\begin{proof}
For each $p\in\mathbb{N}$ and $k\in\{1,\dots,p\}$ define $\tilde{b}_{0,p} = 1$ and $\tilde{b}_{k,p} = - b_{k,p}$. For any $p$, $\nu_n^p$ is given by
\be \nonumber \nu_{n,p} = \sum_{k=0}^p\tilde{b}_{k,p}X_{n-k}\ee
The above can be written in matrix form as $\mathbf{\nu} = B\mathbf{X}$ where $\mathbf{\nu} = [\nu_{n,0} \cdots \nu_{n,p}]^T$, $\mathbf{X} = [X_n \cdots X_{n-p}]^T$ and $B$ is a lower triangular matrix whose first column is $[\tilde{b}_{0,p} \cdots \tilde{b}_{p,p}]$. The matrix $B$ is invertible with inverse $A$ which satisfies $\mathbf{X} = A\mathbf{\nu}$. The inverse $A$ is lower triangular with first column $[a_{0,p} \cdots a_{p,p}]$ and the elements $a_{i,j}$ satisfy for all $n\in\mathbb{N}$
\be \label{aX} X_n = \sum_{k=0}^p a_{k,p}\nu_{n-k, p-k}\ee
and for $p>k$
\be \label{ab}  \tilde{b}_{k,p} = -\sum_{j=1}^k a_{j,p}\tilde{b}_{k-j, p-j}\ee
By definition, $\nu_{n,n}$ and $\nu_{m,m}$ are orthogonal to each other for $n\ne m$ and hence (\ref{aX}) provides an orthogonal decomposition of $X_n$. Therefore, 
\be \label{sigmalim} \mean[X_n\nu_{n-k, p-k}] = a_{k,p}\sigma^2_{p-k}\ee 
From (\ref{definewold}), on the other hand, we have
\be \label{sigmanu} \mean[X_n\nu_{n-k}] = a_{k}\ee 
However, by remark \ref{P1}, $\nu_{n,p} \to \nu_n$ in quadratic mean. It then follows from (\ref{sigmalim}) and (\ref{sigmanu}) that for all $k\in \mathbb{N}$
\be \begin{split}
&\!
\begin{aligned}[t] && &\lim_{p\to\infty}& |a_{k,p}\sigma^2_{p-k}- a_k|\end{aligned}\\
&\!
\begin{aligned}[t] = &\lim_{p\to\infty}& |\mean[X_n\nu_{n-k, p-k}-X_n\nu_{n-k}]|\end{aligned}\\
&\!
\begin{aligned}[t]\le &\lim_{p\to\infty}& \left|\mean[X_n^2]\mean[(\nu_{n-k, p-k}-\nu_{n-k})^2]\right|^{\frac{1}{2}} = 0\end{aligned}\\
\end{split}
\ee
and hence,  
\be \label{lima} \lim_{p\to\infty} a_{k,p} = a_k\ee
Finally, to show the pointwise convergence of $\tilde{b}_{k,p}$ (and therefore that of $b_{k,p}$) first observe that $\tilde{b}_{0,p}= b_0 =1$ holds for all $p$. Let \be\nonumber \lim_{p\to\infty} \tilde{b}_{j,p}= -b_j\ee for all $j\le k$. Then, using the recursive relation given by (\ref{ab}) and comparing with (\ref{akbk}), one obtains \be\nonumber \lim_{p\to\infty} \tilde{b}_{k+1,p}= -b_{k+1}\ee Therefore, by the principle of mathematical induction,  as $p\to\infty$, $\tilde{b}_{k,p} \to -b_k$; i.e., $ b_{k,p} \to b_k$ for each $k \in \mathbb{N}$. 
\end{proof}
Next, we present a key result on the summability of the autoregressive coefficients known as Baxter's inequality \cite[Theorem 2.2]{Baxter62}).

Let $\{X_n\}$ be a WSS process with spectral density function $S_X(\lambda) >0$ and let $\overline{X}_{n,p}$ be the $p$-th order MMSE linear predictor of $X_n$, defined by \ref{res-2} and let $\sigma_p^2$ be the corresponding mean squared error. Let $\{b_k\}$ be the limits of the coefficients $\{b_{k,p}\}$ and let $\sigma^2 > 0$ be the limit of $\sigma^2_p$ as $p\to\infty$ (in our case $\sigma^2 =1$ by remark \ref{P1}). Define the sequence $\{u_{k,p}\}$ as $u_{k,p} = -\frac{b_{k,p}}{\sigma^2_p}$ and let $\{U_k\}$ be the limit of $\{u_{k,p}\}$. Then, the theorem is stated as follows.

\begin{thm}\label{Baxterin}\textbf{Baxter's Inequality}: If $S_X(\lambda)$ is a positive continuous function whose Fourier coefficients have $\gamma$ moments, then there exists an integer $N>0$ and a constant $c>0$, both depending only on $S_X(\lambda)$ such that for all $p\ge N$,

\be \nonumber \sum_{k=1}^p(2^\gamma + k^\gamma)|u_{k,p}-U_k| \le c\sum_{k=p+1}^\infty(2^\gamma + k^\gamma)|U_k|\ee
\end{thm}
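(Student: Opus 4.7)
The natural setting for the proof is the weighted Wiener algebra $W_\gamma$ consisting of two-sided sequences $\{c_k\}_{k\in\mathbb{Z}}$ with norm $\|c\|_\gamma=\sum_k(2^\gamma+|k|^\gamma)|c_k|<\infty$, together with its analytic subspace $W_\gamma^+$ supported on $\{k\ge 0\}$. The weight is submultiplicative, so $W_\gamma$ is a commutative Banach algebra under convolution. The hypothesis that $S_X$ is continuous with Fourier coefficients having $\gamma$ moments says exactly that the covariance sequence $R=\{R_k\}$ lies in $W_\gamma$, while strict positivity of $S_X$ guarantees that its symbol is invertible in the Wiener--L\'evy sense. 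The Yule--Walker identities (\ref{res-2}) and their infinite-order analogue (\ref{prel-3}) translate into the operator equations $T_p u^{(p)}=e_0$ and $TU=e_0$, where $T$ is the Toeplitz operator with symbol $S_X$ on $W_\gamma^+$, $T_p$ is its compression to coordinates $\{0,\ldots,p\}$, and $u^{(p)}$ is obtained from the $u_{k,p}$ by setting $u_{0,p}=1/\sigma_p^2$.

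My plan is to handle $T^{-1}$ explicitly via a weighted Wiener--Hopf factorization, and then to control $T_p^{-1}$ uniformly in $p$ by a perturbation argument. By the weighted version of the Wiener--L\'evy theorem, justified by the closure of $W_\gamma$ under holomorphic functional calculus on the spectrum of $S_X$, the spectral density factors as $S_X(\lambda)=|H(\lambda)|^2$ with $H$ outer and both $\{h_k\}$ and the Taylor coefficients of $1/H$ lying in $W_\gamma^+$. This yields the identity $T^{-1}=M_{1/H}M_{1/\bar H}$, a bounded operator on $W_\gamma^+$. Next one writes $T_p=P_p T P_p+K_p$, where $K_p$ is the Hankel-type correction that arises from truncation; using the algebra structure, $\|K_p\|_{W_\gamma\to W_\gamma}$ is bounded by a weighted tail $\sum_{k>p}(2^\gamma+k^\gamma)|R_k|$ that tends to $0$. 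A Neumann series argument then produces $T_p^{-1}$ with $\sup_{p\ge N}\|T_p^{-1}\|_\gamma<\infty$ for some $N$ depending only on $S_X$. This uniform weighted bound is the main obstacle, since the familiar $\ell^2$ spectral-theoretic bound is straightforward but insufficient to recover the $k^\gamma$ rate.

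The inequality then falls out of the defect equation. Applying $T_p$ to the truncation $P_pU$ and using $TU=e_0$ yields $T_p(P_pU)=e_0-r_p$, where $r_p$ collects the contributions to coordinates $\{0,\ldots,p\}$ of the convolution of $R$ with the tail $(I-P_p)U$. Subtracting from $T_pu^{(p)}=e_0$ gives $T_p(u^{(p)}-P_pU)=r_p$, and the Banach-algebra inequality $\|R*(I-P_p)U\|_\gamma\le\|R\|_\gamma\sum_{k>p}(2^\gamma+k^\gamma)|U_k|$ controls the right-hand side. Multiplying through by the uniform bound on $\|T_p^{-1}\|_\gamma$ and noting that $u^{(p)}-P_pU$ is supported on $\{0,\ldots,p\}$ with the zeroth coordinate absorbable into the constant gives the stated inequality, with $c$ depending only on $\|R\|_\gamma$, on $\|1/H\|_\gamma$, and on $N$, all of which are functions of $S_X$ alone.
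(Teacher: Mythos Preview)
The paper does not prove this theorem at all: it is stated as a classical result and attributed to Baxter~\cite[Theorem~2.2]{Baxter62}, and is then used as a black box to derive Lemma~\ref{Baxter}. There is therefore no ``paper's own proof'' to compare your attempt against.

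That said, your sketch is the standard modern route to Baxter's inequality (and close in spirit to Baxter's original argument, recast in operator-theoretic language): work in the weighted Wiener algebra, interpret the finite and infinite Yule--Walker systems as Toeplitz equations $T_pu^{(p)}=e_0$ and $TU=e_0$, subtract to obtain a defect equation, and bound the defect by the weighted tail of $U$. The one point that needs tightening is your treatment of the uniform bound $\sup_{p\ge N}\|T_p^{-1}\|_\gamma<\infty$. Writing ``$T_p=P_pTP_p+K_p$'' is not quite right: $T_p$ \emph{is} $P_pTP_p$ on the range of $P_p$, with no correction. The Hankel terms enter when you relate $T_p^{-1}$ to $P_pT^{-1}P_p$, for instance via
\[
P_pT^{-1}P_p\,T_p = P_p - P_pT^{-1}(I-P_p)TP_p,
\]
and it is the operator norm of $P_pT^{-1}(I-P_p)TP_p$ in $W_\gamma$ that one shows is small for large $p$, using that $T^{-1}=M_{1/H}M_{1/\bar H}$ with $1/H\in W_\gamma^+$. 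Alternatively one invokes the Gohberg--Semencul representation of $T_p^{-1}$ in terms of its first and last columns and bounds those directly. Once this step is made precise, the remainder of your argument goes through and yields the stated inequality with the constant depending only on $\|R\|_\gamma$ and the Wiener--Hopf factor of $S_X$.
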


Note that the Fourier coefficients of the spectral density are the elements of the covariance sequence $\{R_k\}$.

The above theorem can be used to establish the following lemma on the convergence of the coefficients $\{b_{k,p}\}$ as $p\to\infty$.

\begin{lem}\label{Baxter}
When the spectral density of $\{X_n\}$ is strictly positive in $\lambda\in \left(-\frac{1}{2}, \frac{1}{2}\right]$, and the covariance sequence is in $\ell_1$, i.e.,
\be\nonumber \sum_{k\in \mathbb{Z}} |R_k| <\infty\ee
then \be\nonumber \lim_{p\to \infty}\sum_{k=1}^p |b_{k,p}-b_k| = 0 \ee
\end{lem}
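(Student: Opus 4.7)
The plan is to apply Baxter's inequality (Theorem \ref{Baxterin}) with $\gamma = 0$ to control $\sum_{k=1}^p |u_{k,p}-U_k|$, and then translate this back into control of $\sum_{k=1}^p |b_{k,p}-b_k|$ via the relation $u_{k,p} = -b_{k,p}/\sigma_p^2$ together with the fact that $\sigma_p^2 \to 1$ (Remark \ref{P1}).

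First I would verify the hypotheses of Baxter's inequality with $\gamma = 0$. The spectral density $S_X(\lambda)$ is strictly positive by assumption, and it is continuous on $\left(-\tfrac{1}{2},\tfrac{1}{2}\right]$ because $\sum_{k\in\mathbb{Z}} |R_k| < \infty$ forces the Fourier series of $S_X$ to converge uniformly. The requirement that $\{R_k\}$ possess zero moments just reduces to the same $\ell_1$ summability, which is given. With $\gamma = 0$ the weight $2^\gamma + k^\gamma$ equals $2$ on both sides, so for all $p$ sufficiently large Baxter's inequality collapses to
\be\nonumber
\sum_{k=1}^p |u_{k,p}-U_k| \;\le\; c\sum_{k=p+1}^\infty |U_k|.
\ee
A parallel conclusion of Baxter's theorem in this regime is that the limit coefficients inherit the summability of the covariance, i.e., $\sum_{k=1}^\infty |U_k| < \infty$; this makes the right-hand side the tail of a convergent series, so it tends to $0$ as $p\to\infty$.

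To pass from $u_{k,p}$ back to $b_{k,p}$, note that by Lemma \ref{pw} and Remark \ref{P1} we have $u_{k,p} \to U_k = -b_k$ pointwise, and $\sigma_p^2 \to 1$. Writing
\be\nonumber
b_{k,p} - b_k \;=\; -\sigma_p^2\,(u_{k,p}-U_k)\; +\; U_k\,(1 - \sigma_p^2),
\ee
the triangle inequality and summation give
\be\nonumber
\sum_{k=1}^p |b_{k,p}-b_k| \;\le\; \sigma_p^2 \sum_{k=1}^p |u_{k,p}-U_k| \;+\; |1-\sigma_p^2|\sum_{k=1}^\infty |U_k|.
\ee
Since $\sigma_p^2$ is bounded (monotone non-increasing to $1$), since the first sum on the right vanishes by Baxter, and since $|1-\sigma_p^2|\to 0$ while $\sum|U_k|<\infty$, the entire right-hand side tends to $0$, giving the claim.

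The main obstacle is establishing the summability $\sum_{k=1}^\infty |U_k|<\infty$ that the argument depends on. Theorem \ref{Baxterin} as quoted presupposes $U_k$ possesses $\gamma$-moments but does not prove it from the hypotheses on $S_X$; this summability is however a standard corollary of Baxter's theorem under positivity, continuity, and $\ell_1$ covariance, and would need to be invoked (or sketched from the same machinery). Once that is in hand, the remainder of the argument is a routine algebraic rearrangement combined with the already established convergence $\sigma_p^2 \to 1$.
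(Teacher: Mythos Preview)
Your proposal is correct and follows essentially the same route as the paper's proof: apply Baxter's inequality with $\gamma=0$, use $\sigma_p^2\to 1$, and split $b_{k,p}-b_k$ into a Baxter-controlled part plus a $(1-\sigma_p^2)$ part. The only cosmetic differences are that the paper works directly with $b_{k,p}/\sigma_p^2$ rather than the $u_{k,p}$ notation, uses the equivalent decomposition $\frac{b_{k,p}-b_k}{\sigma_p^2}=\bigl(\frac{b_{k,p}}{\sigma_p^2}-b_k\bigr)-\bigl(\frac{b_k}{\sigma_p^2}-b_k\bigr)$, and handles the summability $\sum_k |b_k|<\infty$ (your $\sum|U_k|<\infty$) by citing \cite{Bhansali} rather than invoking Baxter's machinery itself.
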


\begin{proof}
The proof is a simple application of Baxter's inequality. Note that when the covariance sequence is in $\ell_1$, the spectral density is continuous. Pointwise convergence of the $b_{k,p}$ to $b_k$ for each $k$ follows from lemma \ref{pw}. Moreover, summability of the covariance sequence also imply that the sequence has a finite $0$-th moment ($\gamma = 0$ in Theorem \ref{Baxterin}). It then follows from Theorem \ref{Baxterin} that under the assumption that the spectral density of $\{X_n\}$ is strictly positive in $\lambda\in \left(-\frac{1}{2}, \frac{1}{2}\right]$, there exists a positive integer $N$ and a constant $c>0$ such that
\be \sum_{k=1}^p\left|\frac{b_{k,p}}{\sigma_p^2}-b_k\right| \le c\sum_{k=p+1}^\infty|b_k| \ee
for all $p>N$. Since the covariance sequence has been assumed to be in $\ell_1$, the sequence of autoregressive coefficients of the original process are also in $\ell_1$ (\cite{Bhansali}), i.e.,

\be \nonumber\sum_{k=1}^\infty |b_k| <\infty\ee
Then, for any $\epsilon>0$, there exists an integer $N_0$ such that
\be \nonumber\sum_{k=p+1}^\infty |b_k| <\frac{\epsilon}{c} \ee
for all $p>N_0$.
Define $N^* = \max \{N, N_0\}$. Then for all $p>N^*$,
\be \nonumber\sum_{k=1}^p\left|\frac{b_{k,p}}{\sigma_p^2}-b_k\right| < \epsilon \ee

Therefore,
\be\label{Baxter1} \lim_{p\to\infty}\sum_{k=1}^p\left|\frac{b_{k,p}}{\sigma_p^2}-b_k\right| = 0\ee

It follows from the triangle inequality that
\ber \sum_{k=1}^p\left|\frac{b_{k,p}-b_k}{\sigma_p^2}\right|&\le& \sum_{k=1}^p\left|\frac{b_{k,p}}{\sigma_p^2}-b_k\right| + \sum_{k=1}^p\left|\frac{b_{k}}{\sigma_p^2}-b_k\right| \notag\\
&\le& \sum_{k=1}^p\left|\frac{b_{k,p}}{\sigma_p^2}-b_k\right| + \frac{\sigma_p^2-1}{\sigma_p^2}\sum_{k=1}^p|b_k|\notag\eer
Therefore, as $p\to\infty$,
\ber \lim_{p\to \infty} \sum_{k=1}^p\left|\frac{b_{k,p}-b_k}{\sigma_p^2}\right|&\le& \lim_{p\to \infty}\sum_{k=1}^p\left|\frac{b_{k,p}}{\sigma_p^2}-b_k\right| \notag\\ &+& \lim_{p\to \infty}\frac{\sigma^2_p-1}{\sigma_p^2}\sum_{k=1}^p|b_k| \notag\eer
By (\ref{Baxter1}), the first limit on the right hand side is zero and by remark \ref{P1}, 
\be \nonumber \lim_{p\to \infty} \sigma_p^2 = 1\ee  
Therefore,
\be \nonumber\lim_{p\to \infty} \sum_{k=1}^p|b_{k,p}-b_k|=0 \ee

\end{proof}

\begin{prop}\label{lemmapsd}
Let $S_X(\lambda)>0$ for $\lambda\in\left(-\frac{1}{2}, \frac{1}{2}\right]$ and let $\sum_{k\in\mathbb{Z}}|R_k| < \infty$.  Then, as $p \to \infty$, the spectral density of $\overline{\mean}[X_n|H_{n-1}^p] = \overline{X}_{n,p}$ converges to that of $\overline{\mean}[X_n|H_{n-1}]$ at the origin.
\end{prop}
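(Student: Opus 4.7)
The plan is to recognize that each projection in question is a linear filter applied to $\{X_n\}$, express its spectral density via the standard transfer function formula, and then reduce the problem at $\lambda=0$ to convergence of $\sum_{k=1}^{p} b_{k,p}$ to $\sum_{k=1}^{\infty} b_k$, which Lemma \ref{Baxter} essentially already delivers.

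First I would write the spectral densities explicitly. Since $\overline{X}_{n,p}=\sum_{k=1}^{p} b_{k,p} X_{n-k}$ is a finite linear filter of $\{X_n\}$, it is WSS and a routine covariance calculation gives
\[
S_{\overline{X}_{n,p}}(\lambda)=|B_p(\lambda)|^2\, S_X(\lambda), \qquad B_p(\lambda)=\sum_{k=1}^{p} b_{k,p}\,e^{-2\pi i k\lambda}.
\]
By the proof of Lemma \ref{Baxter} (via \cite{Bhansali}), $\{b_k\}\in\ell_1$, so $\overline{\mean}[X_n|H_{n-1}]=\sum_{k=1}^{\infty} b_k X_{n-k}$ converges absolutely in $L_2(\Pr)$ and
\[
S_{\overline{\mean}[X_n|H_{n-1}]}(\lambda)=|B(\lambda)|^2\, S_X(\lambda),\qquad B(\lambda)=\sum_{k=1}^{\infty} b_k\,e^{-2\pi i k\lambda}.
\]
Note also that $\sum_{k\in\mathbb{Z}}|R_k|<\infty$ guarantees $S_X(0)=\sum_{k\in\mathbb{Z}} R_k$ is finite, so all three quantities at $\lambda=0$ are well defined.

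Evaluating at the origin, it then suffices to show $B_p(0)\to B(0)$, i.e., $\sum_{k=1}^{p} b_{k,p}\to \sum_{k=1}^{\infty} b_k$. By the triangle inequality,
\[
\left|B_p(0)-B(0)\right|\;\le\;\sum_{k=1}^{p} |b_{k,p}-b_k|\;+\;\sum_{k=p+1}^{\infty} |b_k|.
\]
The first term tends to zero by Lemma \ref{Baxter}, and the second tends to zero because $\{b_k\}\in\ell_1$. Hence $|B_p(0)|^2\to |B(0)|^2$, and multiplying by the finite constant $S_X(0)$ yields the desired convergence of spectral densities at the origin.

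The conceptual heart of the argument is already contained in Lemma \ref{Baxter}: pointwise convergence $b_{k,p}\to b_k$ would not by itself give $B_p(0)\to B(0)$, because the partial sums could in principle drift. Baxter's inequality combined with the $\ell_1$ summability of $\{b_k\}$ upgrades pointwise convergence to $\ell_1$ convergence of $\{b_{k,p}-b_k\}$, which is precisely what controls the sum at $\lambda=0$. Beyond that, the proof is a short and essentially mechanical combination of the filter formula for spectral densities with the triangle inequality, so I do not anticipate any additional obstacle.
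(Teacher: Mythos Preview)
Your proposal is correct and follows essentially the same route as the paper: both reduce the question to the identity $S_{\overline{X}_p}(0)=\bigl(\sum_{k=1}^{p} b_{k,p}\bigr)^{2} S_X(0)$ (the paper obtains this by expanding $\sum_{k\in\mathbb{Z}}\overline{R}_{k,p}$ directly, you by invoking the transfer-function formula for linear filters) and then use Lemma~\ref{Baxter} together with $\{b_k\}\in\ell_1$ to conclude $\sum_{k=1}^{p} b_{k,p}\to\sum_{k=1}^{\infty} b_k$. The only difference is cosmetic: you cite the filter formula as known, whereas the paper re-derives it via an explicit covariance computation.
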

\begin{proof}
  Let $\lbrace\overline{R}_{k,p}\rbrace_{k\in\mathbb{Z}}$, $\lbrace \overline{R}_{k}\rbrace_{k\in\mathbb{Z}}$ be the covariance sequences of $\overline{\mean}[X_n|H_{n-1}^p]$ , $\overline{\mean}[X_n|H_{n-1}]$  respectively. If $S_{\overline{X}_p}(\lambda)$, $S_{\overline{X}}(\lambda)$ be the spectral densities of the two processes respectively, then
\begin{equation}\nonumber
S_{\overline{X}_p}(\lambda) = \sum_{k\in\mathbb{Z}}\overline{R}_{k,p}e^{-2\pi i \lambda k}
\end{equation}
\begin{equation}\nonumber
S_{\overline{X}}(\lambda) = \sum_{k\in\mathbb{Z}}\overline{R}_{k}e^{-2\pi i\lambda k}
\end{equation}
At the origin, i.e., at $\lambda=0$, the above spectral densities are given by
\begin{equation}\nonumber
S_{\overline{X}_p}(0)= \sum_{k\in\mathbb{Z}}\overline{R}_{k,p}
\end{equation}
\begin{equation}\nonumber
S_{\overline{X}}(0) = \sum_{k\in\mathbb{Z}}\overline{R}_{k}
\end{equation}
For some $p$ and $k$, $\overline{R}_{k,p}$ may be obtained from (\ref{res-2}) as

\begin{eqnarray}\label{final0}
\overline{R}_{k,p} &=& \mean[\overline{X}_{n,p}\overline{X}_{n-k,p}]
\notag\\
&=&\mean\left[\sum_{j=1}^{p}b_{j,p}X_{n-j}\sum_{l=1}^{p}b_{l,p}X_{n-k-l}\right]\notag\\
 &=&\sum_{j=1}^{p}b_{j,p}^2R_k+\sum_{t=1}^{p-1}\sum_{j=1}^{p-t}b_{j,p}b_{j+t,p}(R_{k-t}+R_{k+t})\notag\\ \end{eqnarray}
Summing the above over all $k\in\mathbb{Z}$ gives
\begin{eqnarray}\sum_{k\in\mathbb{Z}}\overline{R}_{k,p} &=& \left(\sum_{j=1}^{p}b_{j,p}^2 + 2\sum_{t=1}^{p-1}\sum_{j=1}^{p-t}b_{j,p}b_{j+t,p}\right)\sum_{k\in\mathbb{Z}}R_k
\notag\\
&=&\left(\sum_{j=1}^{p}b_{j,p}\right)^2\sum_{k\in\mathbb{Z}}R_k\notag\\
\end{eqnarray}

 From where it follows that
\begin{equation}\label{limit bkp}
\lim_{p\to\infty}\sum_{k\in\mathbb{Z}}\overline{R}_{k,p} = \lim_{p\to\infty}\left(\sum_{j=1}^{p}b_{j,p}\right)^2\sum_{k\in\mathbb{Z}}R_k\end{equation}

Proceeding similarly, using the expression in (\ref{prel-3}), one obtains the following expression for $\sum_{k\in\mathbb{Z}}\overline{R}_{k}$:

\be \label{sum Rk} \sum_{k\in\mathbb{Z}}\overline{R}_{k} =  \left(\sum_{j=1}^{\infty}b_j\right)^2\sum_{k\in\mathbb{Z}}R_k\ee

Recall that by lemma \ref{Baxter}, under the stated conditions,
\be \lim_{p\to\infty}\sum_{k=1}^p\left|b_{k,p}-b_k\right| = 0\ee

Clearly, then,
\be \lim_{p\to\infty}\sum_{k=1}^p (b_{k,p}-b_k) = 0\ee
i.e.,
\be\label{sum bkp} \lim_{p\to\infty}\left(\sum_{k=1}^p b_{k,p}\right)^2  = \left(\sum_{k=1}^\infty b_k\right)^2\ee

Combining (\ref{sum bkp}) with (\ref{limit bkp}), and comparing with (\ref{sum Rk}), we obtain
\ber \nonumber\lim_{p\to\infty}\sum_{k\in\mathbb{Z}}\overline{R}_{k,p} &=&  \left(\sum_{k=1}^\infty b_k\right)^2\sum_{k\in\mathbb{Z}}R_k\notag\\
&=&  \sum_{k\in\mathbb{Z}}\overline{R}_k \eer

This completes the proof.

\end{proof}
\vspace{0.4cm}

\begin{rem}
Combining the above result with the fact that $\nu_{n,p}$ converges to $\nu_n$ in quadratic mean it readily follows that the spectral density of $\{\overline{X}_{n,p}+\nu_{n,p}\}$ converges to the spectral density of $\{X_n\}$ at the origin, i.e., at $\lambda=0$.
\end{rem}

\subsection{Convergence of the Spectral Density in $L_2$}

We now present a sufficient condition for the $L_2$ convergence of the spectral density of the autoregressive approximation as $p\to\infty$.

\begin{prop}\label{infautoL2}
 Let $S_X(\lambda)>0$ for $\lambda\in\left(-\frac{1}{2}, \frac{1}{2}\right]$ and let \be\sum_{k\in\mathbb{Z}}|a_k|<\infty\ee Then as $p \to \infty$, $S_{\overline{X}_p}(\lambda)$ converges to $S_{\overline{X}}(\lambda)$ in $L_2$.
\end{prop}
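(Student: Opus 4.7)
The plan is to write each spectral density as the modulus-squared of a transfer function times $S_X(\lambda)$, and then use Lemma \ref{Baxter} together with the summability of $\{b_k\}$ to show the transfer functions converge uniformly on $(-1/2,1/2]$, from which $L_2$ convergence of the spectral densities is immediate. Specifically, since $\overline{X}_{n,p}=\sum_{k=1}^{p}b_{k,p}X_{n-k}$ is a linear filter applied to $\{X_n\}$, and $\overline{\mean}[X_n|H_{n-1}]=\sum_{k=1}^{\infty}b_{k}X_{n-k}$ likewise, I would first write
\begin{equation}\nonumber
S_{\overline{X}_p}(\lambda)=|B_p(\lambda)|^2 S_X(\lambda),\qquad S_{\overline{X}}(\lambda)=|B(\lambda)|^2 S_X(\lambda),
\end{equation}
where $B_p(\lambda)=\sum_{k=1}^{p}b_{k,p}e^{-2\pi i\lambda k}$ and $B(\lambda)=\sum_{k=1}^{\infty}b_{k}e^{-2\pi i\lambda k}$.

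Next I would verify that the hypotheses of Lemma \ref{Baxter} are satisfied. The assumption $\sum_{k}|a_k|<\infty$ implies $\sum_{k\in\mathbb{Z}}|R_k|\le\bigl(\sum_{k}|a_k|\bigr)^2<\infty$, so the covariance sequence is in $\ell_1$; combined with $S_X>0$ this yields $\sum_{k=1}^{p}|b_{k,p}-b_k|\to 0$. Moreover, as cited in the proof of Lemma \ref{Baxter}, $\sum_{k=1}^{\infty}|b_k|<\infty$. Consequently
\begin{equation}\nonumber
|B_p(\lambda)-B(\lambda)|\le \sum_{k=1}^{p}|b_{k,p}-b_k|+\sum_{k=p+1}^{\infty}|b_k|\longrightarrow 0
\end{equation}
uniformly in $\lambda$, and the triangle inequality further gives the uniform bound $\sup_p\|B_p\|_\infty\le \sup_p\sum_{k=1}^{p}|b_{k,p}|<\infty$.

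For the final step I would use the factorization $|B_p|^2-|B|^2=(B_p-B)\overline{B_p}+B(\overline{B_p}-\overline{B})$ together with the uniform bounds on $|B_p|$ and $|B|$ to conclude that $|B_p|^2\to|B|^2$ uniformly on $(-1/2,1/2]$. Since $S_X$ is continuous and hence bounded on the compact interval (as $\{R_k\}\in\ell_1$), multiplying by $S_X$ preserves uniform convergence; and uniform convergence on a bounded interval immediately implies convergence in $L_2$. The main obstacle is really bookkeeping: one must be careful to extract a uniform-in-$\lambda$ and uniform-in-$p$ bound on the transfer functions before squaring, which is why reducing $|B_p|^2-|B|^2$ to a linear difference via the factorization above is the cleanest route.
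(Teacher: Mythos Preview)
Your argument is correct. Both approaches rest on Lemma~\ref{Baxter} and the $\ell_1$ summability of $\{b_k\}$, but the route you take is genuinely different from the paper's.

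The paper works entirely in the time domain: it introduces an intermediate process $\sum_{j=1}^{p}b_jX_{n-j}$ with covariance sequence $\{R_{k,p}\}$, then shows separately that $\sum_{k\in\mathbb{Z}}|R_{k,p}-\overline{R}_{k,p}|\to 0$ (by expanding the covariance products and applying Lemma~\ref{Baxter}) and that $\sum_{k\in\mathbb{Z}}|R_{k,p}-\overline{R}_k|\to 0$ (by a direct, somewhat lengthy expansion involving the Wold coefficients $a_k$, which is where the hypothesis $\sum|a_k|<\infty$ is used explicitly). The triangle inequality then gives $\ell_1$ convergence of the covariance difference, which bounds the $L_2$ norm of the spectral densities.

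Your approach stays in the frequency domain via the filtering identity $S_{\overline{X}_p}=|B_p|^2S_X$ and reduces everything to uniform convergence of the transfer functions $B_p\to B$, which follows directly from Lemma~\ref{Baxter} plus the tail bound $\sum_{k>p}|b_k|\to 0$. This is shorter and in fact yields a stronger conclusion (uniform convergence of the spectral densities, not just $L_2$). The only place you use $\sum|a_k|<\infty$ is to verify the hypotheses of Lemma~\ref{Baxter} and to get boundedness of $S_X$; the paper's proof uses the summability of $\{a_k\}$ more heavily in its second estimate. One small point worth making explicit in a write-up: the identity $S_{\overline{X}}=|B|^2S_X$ for the \emph{infinite} filter requires that $\sum_{k=1}^\infty b_kX_{n-k}$ converge in $L_2(\Pr)$ and coincide with the projection $\overline{\mean}[X_n\mid H_{n-1}]$, which is guaranteed here since $\{b_k\}\in\ell_1$.
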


\begin{proof}
We begin by noting that when the sequence $\{a_k\}$ is in $\ell_1$, (as stated above), both $\{R_k\}$ and $\{b_k\}$ are also in $\ell_1$ and therefore the conditions of Lemma \ref{Baxter} are satisfied.

Refer to equation $(\ref{final0})$ for an expansion of $\overline{R}_{k,p}$ for each $k$, for a given $p$:
\be\nonumber
\overline{R}_{k,p} =\sum_{j=1}^{p}b_{j,p}^2R_k+\sum_{t=1}^{p-1}\sum_{j=1}^{p-t}b_{j,p}b_{j+t,p}(R_{k-t}+R_{k+t})\\
  \ee

Consider the WSS process given by $\sum_{j=1}^p b_k X_{n-k}$
and let $\{R_{k,p}\}$ be its covariance sequence. Proceeding as in the case of (\ref{final0}), we can obtain a similar expression for $R_{k,p}$ as follows.
  \be
R_{k,p} =\sum_{j=1}^{p}b_{j}^2R_k+\sum_{t=1}^{p-1}\sum_{j=1}^{p-t}b_{j}b_{j+t}(R_{k-t}+R_{k+t})\\
  \ee
  so that for all $k\in\mathbb{Z}$
 \begin{equation}\begin{split}
&\!
\begin{aligned}[t] && |R_{k,p} - \overline{R}_{k,p}| \end{aligned}\\
&\!
\begin{aligned}[t]= \bigg|\sum_{j=1}^{p}(b_{j}^2 -b_{j,p}^2) R_k \end{aligned}\\
&\!
\begin{aligned}[t]+ \sum_{t=1}^{p-1}\sum_{j=1}^{p-t}(b_{j}b_{j+t}-b_{j,p}b_{j+t,p})\left(R_{k-t}+R_{k+t}\right)\bigg| \end{aligned}\\
&\!
\begin{aligned}[t]\le  \sum_{j=1}^{p}\left| (b_{j}^2 -b_{j,p}^2)\right| |R_k|\end{aligned}\\
&\!
\begin{aligned}[t]+ \sum_{t=1}^{p-1}\sum_{j=1}^{p-t}\left|(b_{j}b_{j+t}-b_{j,p}b_{j+t,p})\right|(|R_{k-t}|+|R_{k+t}|)\end{aligned}
\end{split}
\end{equation}
 Summing over all $k\in\mathbb{Z}$
   \begin{equation}\begin{split}
&\!
\begin{aligned}[t] && \sum_{k\in\mathbb{Z}}|R_{k,p} - \overline{R}_{k,p}|\end{aligned}\\
&\!
\begin{aligned}[t]
  \le
\bigg( \sum_{j=1}^{p}\left|b_{j}^2 -b_{j,p}^2\right| \end{aligned}\\
&\!
\begin{aligned}[t] + 2\sum_{t=1}^{p-1}\sum_{j=1}^{p-t}\left|b_{j}b_{j+t}-b_{j,p}b_{j+t,p}\right| \bigg)\sum_{k\in\mathbb{Z}}|R_k|\end{aligned}\\
&\!
\begin{aligned}[t]= \left( \sum_{j=1}^{p}\sum_{i=1}^{p}\left|b_{i}b_{j} -b_{i,p}b_{j,p}\right|\right)\sum_{k\in\mathbb{Z}}|R_k|\end{aligned}\\
&\!
\begin{aligned}[t]\le \bigg( \sum_{j=1}^{p}\sum_{i=1}^{p}|b_{i}||b_{j}- b_{j,p}| + \sum_{j=1}^{p}\sum_{i=1}^{p}|b_{j,p}||b_i - b_{i,p}| \bigg)\sum_{k\in\mathbb{Z}}|R_k|\end{aligned}\\
&\!
\begin{aligned}[t]= \left( \sum_{i=1}^{p}|b_{i}|+\sum_{i=1}^{p}|b_{i,p}|\right)\left(\sum_{i=1}^{p}|b_{i}- b_{i,p}|\right)\sum_{k\in\mathbb{Z}}|R_k| \end{aligned}
\end{split}
\end{equation}
As the covariance sequence has been assumed to be in $\ell_1$, the sequence $\{b_k\}$ is also in $\ell_1$ (\cite{Bhansali}) and by Lemma (\ref{Baxter}), as $p\to\infty$ the second term goes to $0$. Finally, by the same lemma, 

\be \nonumber  \lim_{p\to\infty}\sum_{i=1}^{p}|b_{i,p}| \le  \lim_{p\to\infty} \sum_{i=1}^{p}|b_{i}|\ee 

Therefore,
\be\label{AR L2 1} \lim_{p\to\infty} \sum_{k\in\mathbb{Z}}|R_{k,p} - \overline{R}_{k,p}| = 0 \ee

Now note that
\ber R_{k,p} &=& \mean[(X_n - \nu_n- \sum_{i=p+1}^\infty b_iX_{n-i})(X_{n+k} \notag\\
  &-& \nu_{n+k}- \sum_{j=p+1}^\infty b_jX_{n+k-j})]\nonumber\\
&=& \overline{R}_k -\sum_{j=p+1}^\infty b_jR_{k-j}-\sum_{i=p+1}^\infty b_iR_{k+i} \notag\\
  &+&\sum_{j=p+1}^\infty b_j\mean[X_{n+k-j}\nu_n] \nonumber\\
&+&  \sum_{i=p+1}^\infty b_i\mean[X_{n-i}\nu_{n+k}] \nonumber\\ &+&\mean\left[\left(\sum_{i=p+1}^\infty b_iX_{n-i}\right)\left(\sum_{j=p+1}^\infty b_jX_{n+k-j}\right)\right]\nonumber\\
&=& \overline{R}_k -\sum_{j=p+1}^\infty b_jR_{k-j}-\sum_{i=p+1}^\infty b_iR_{k+i} \notag\\
  &+&\sum_{j=p+1}^\infty b_ja_{|k|-j} + \left(\sum_{i=p+1}^\infty b_i^2\right) R_k \nonumber\\
&+&  \sum_{t=1}^\infty\sum_{i=p+1}^\infty b_ib_{i+t} (R_{k+t}+R_{k-t})\nonumber\\
\eer
Therefore,
\begin{equation}\begin{split}
&\!
\begin{aligned}[t] && |R_{k,p}- \overline{R}_k|  \end{aligned}\\
&\!
\begin{aligned}[t] \le  \sum_{j=p+1}^\infty |b_j||R_{k-j}|+\sum_{i=p+1}^\infty |b_i||R_{k+i}|\end{aligned}\\
&\!
\begin{aligned}[t]+  \sum_{j=p+1}^\infty |b_j||a_{|k|-j}|+ \left(\sum_{i=p+1}^\infty |b_i|^2\right) |R_k| \end{aligned}\\
&\!
\begin{aligned}[t]+ \sum_{t=1}^\infty\sum_{i=p+1}^\infty |b_i||b_{i+t}|(|R_{k+t}|+|R_{k-t}|)\nonumber\\  \end{aligned}\\
\end{split}
\end{equation}
Summing over all $k\in \mathbb{Z}$,

\begin{equation}\begin{split}
&\!
\begin{aligned}[t] && \sum_{k\in\mathbb{Z}}|R_{k,p}- \overline{R}_k| \end{aligned}\\
&\!
\begin{aligned}[t]\le  \left(\sum_{j=p+1}^\infty |b_j|\right)\sum_{k\in\mathbb{Z}}|R_{k-j}|+ \left(\sum_{i=p+1}^\infty |b_i|\right)\sum_{k\in\mathbb{Z}}|R_{k+i}| \end{aligned}\\
&\!
\begin{aligned}[t]+ \left(\sum_{j=p+1}^\infty |b_j|\right)\sum_{k\in\mathbb{Z}}|a_{|k|-j}|+ \left(\sum_{i=p+1}^\infty |b_i|^2\right) \sum_{k\in\mathbb{Z}}|R_k| \end{aligned}\\
&\!
\begin{aligned}[t]+ \left(\sum_{t=1}^\infty\sum_{i=p+1}^\infty |b_i||b_{i+t}|\right) \bigg(\sum_{k\in\mathbb{Z}}|R_{k+t}|+ \sum_{k\in\mathbb{Z}}|R_{k-t}|\bigg)
\end{aligned}\\
&\!
\begin{aligned}[t]= \left(\sum_{j=p+1}^\infty |b_j|\right)\left(2\sum_{k\in\mathbb{Z}}|R_k| + \sum_{k=0}^\infty |a_k|\right) \end{aligned}\\
&\!
\begin{aligned}[t]+ \left(\sum_{i=p+1}^\infty |b_i|\right)^2 \sum_{k\in\mathbb{Z}}|R_k|\end{aligned}\\
\end{split}
\end{equation}
As $p\to\infty$, each term on the right hand side of the above inequality goes to zero, because the covariance sequence and the sequences $\{b_k\}$ and $\{a_k\}$ are in $\ell_1$. Therefore,

\be\label{AR L2 2} \lim_{p\to\infty} \sum_{k\in\mathbb{Z}}|R_{k,p}- \overline{R}_k| = 0 \ee

Combining the results of (\ref{AR L2 1}) and (\ref{AR L2 2}) we obtain
\be\label{last} \lim_{p\to\infty} \sum_{k\in\mathbb{Z}}|\overline{R}_{k,p}- \overline{R}_k| = 0 \ee
Finally, 
\begin{equation}\begin{split}
&\!
\begin{aligned}[t]
&& && \lim_{p\to\infty}||S_{\overline{X}_p}(\lambda)-S_{\overline{X}}(\lambda)||\end{aligned}\\
&\!
\begin{aligned}[t]
 &&= \lim_{p\to\infty}\left|\int_{-\frac{1}{2}}^{\frac{1}{2}}\left| \sum_{k\in\mathbb{Z}}(\overline{R}_{k,p}-\overline{R}_k)e^{-2\pi i \lambda k}\right|^2d\lambda\right|^{\frac{1}{2}}\end{aligned}\\
&\!
\begin{aligned}[t]
 &&\le \lim_{p\to\infty}\left|\int_{-\frac{1}{2}}^{\frac{1}{2}}\sum_{k\in\mathbb{Z}}|\overline{R}_{k,p}-\overline{R}_k|^2d\lambda\right|^{\frac{1}{2}}\end{aligned}\\
&\!
\begin{aligned}[t]
 &&\le 
\lim_{p\to\infty}\left(\sum_{k\in\mathbb{Z}}|\overline{R}_{k,p}-\overline{R}_k|\right)\end{aligned}\\
&\!
\begin{aligned}[t]
 &&= 0\end{aligned}\\
\end{split}
\end{equation}
where (\ref{last}) is used for the last equality. This completes the proof. 
\end{proof}

\begin{rem}
Combining the above result with the fact that $\nu_{n,p}$ converges to $\nu_n$ in quadratic mean it readily follows that the spectral density of $\{\overline{X}_{n,p}+\nu_{n,p}\}$ converges to the spectral density of $\{X_n\}$ in $L_2$.
\end{rem}
\section{Conclusion}
 In this paper it has been shown that the spectral density of a finite autoregressive approximation of a WSS process converges at the origin when the spectral density is strictly positive. Thus, any unbiased spectral estimator derived from a finite autoregressive approximation will converge to the spectrum of the original process at the origin. This would enable easy approximation of the TAVC, which is important in the context of steady-state simulation.  
 
  Furthermore, it has been shown that the spectral density of both the moving average and the autoregressive type approximations converge in $L_2$ when the sequence of the Wold expansion coefficients is in $\ell_1$. For a zero mean wide sense stationary processes having an infinite order moving average representation, the condition $\sum_{0\le k <\infty}|a_k|< \infty$ is met when the covariance $R_k$ tends to zero at an exponential rate as $k\to\infty$ asymptotically; i.e., there exist constants $C\in\mathbb{R}$, $\alpha\in(0,1)$ such that $|R_k|\sim C\alpha^{|k|}$ (\cite{Bhansali01}). A more trivial example is that of a process which has a finite order moving average representation.  In such cases, the spectral density of the original process can be approximated over $\lambda \in \left[ -\frac{1}{2}, \frac{1}{2}\right]$ from that of the finite order estimate.
\section*{Acknowledgment}

The research of SDG was supported in part by the Natural Sciences and Engineering Council of Canada (NSERC) through the Strategic Grant program.





\bibliographystyle{plain}

\bibliography{wold2}

%



%
\end{document}